\newcommand{\qed}{\hfill $\square$}
\newenvironment{proof}{\noindent{\em Proof}.}{\qed\bigskip}
\newtheorem{theorem}{Theorem}[section]
\newtheorem{lemma}[theorem]{Lemma}
\newtheorem{corollary}[theorem]{Corollary}
\newtheorem{question}{Question}[section]
\newtheorem{conjecture}[question]{Conjecture}
\newcommand{\comments}[1]{}
\title{The antimagic orientation problems for graphs obtained by some graph operations}
\author{
Eranda Dhananjaya
\thanks{Department of Applied Mathematics, National Chung Hsing University, Taichung 40227, Taiwan
{\tt Email:edhananjaya1991@gmail.com} supported by MOST-110-2115-M-005 -005 -MY2.}
\and
Wei-Tian Li
\thanks{Department of Applied Mathematics, National Chung Hsing University, Taichung 40227, Taiwan
{\tt Email:weitianli@nchu.edu.tw} supported by MOST-110-2115-M-005 -005 -MY2.}
}
\date{\today}
\begin{document}
 
\maketitle

\begin{abstract}
A simple graph $G$ is said to admit an antimagic orientation if there exist 
an orientation on the edges of $G$ and a bijection from $E(G)$ to $\{1,2,\ldots,|E(G)|\}$ such that the vertex sums of vertices are pairwise distinct, where the vertex sum of a vertex is defined to be the sum of the labels of the in-edges minus that of the out-edges incident to the vertex. It was conjectured by 
Hefetz, M\"{u}tze, and Schwartz~\cite{HMS10} in 2010
that every connected simple graph admits an antimagic orientation. 
In this paper, we prove that the Mycielski construction and the corona product for graphs with some conditions yield graphs satisfying the above conjecture.
\end{abstract}

{\bf Keywords:} antimagic labeling, antimagic orientation, Euler circuit, Mycielski construction, corona product.

\section{Introduction}

In this paper, unless we particularly mention, all graphs are simple. 
Most of the notation and terminology follows~\cite{W01}. 
For a graph $G$, {\em an antimagic labeling} of $G$ is a bijection from the edge set $E(G)$ to the set  $\{1,2,\ldots,|E(G)|\}$ such that the vertex sums are pairwise distinct, where the vertex sum of a vertex $v\in V(G)$ is  defined as the sum of the labels of all edges incident to $v$. It was conjectured by Hartsfield and Ringel~\cite{HR90} in 90's that every connected graph other than $K_2$ has an antimagic labeling. The conjecture has been verified for various graphs, but is still wide open in general. Some remarkable classes of graphs are dense graphs~\cite{AKLRY04}, regular graphs ~\cite{CLPZ16}, and trees with at most one vertex of degree two~\cite{LWZ14}. For more results of the original antimagic problems, we refer a comprehensive survey~\cite{G19} to the readers.

The study of antimagic labeling on directed graphs began much later. 
It was initiated by Hefetz, M\"{u}tze, and Schwartz~\cite{HMS10} in 2010. 
For a directed graph $G$, the vertex sum of a vertex $v$ is defined as 
the sum of the labels of all edges entering $v$ minus the sum of the labels of all edges leaving $v$. 
As before, we say $G$ is antimagic if the vertex sums are pairwise distinct. 
Hefetz et al. pointed out that the directed $C_3$ and $P_3$ are not antimagic. On the other hand, it is not hard to show among all of the directed graphs whose underlying graph is $P_3$ (or $C_3$), there exists a directed graph that is antimagic. So they proposed two questions: 

\begin{question}\label{QUES}
Is every connected directed graph with at least 4 vertices antimagic?
\end{question}

\begin{conjecture}\label{CONJ}
Every connected graph admits an antimagic orientation.
\end{conjecture}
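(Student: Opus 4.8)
The plan is to exploit the three natural classes of vertices and edges in the Mycielskian $M(G)$. Write $V=\{v_1,\dots,v_n\}$ for the original vertices, $U=\{u_1,\dots,u_n\}$ for their shadows, and $w$ for the apex. The $3m+n$ edges split into the $m$ original edges (lying inside $V$), the $2m$ cross edges $u_iv_j$ (one for each ordered incidence $v_iv_j$ of $G$, each joining $U$ to $V$), and the $n$ spokes $u_iw$. The decisive structural observation is that every cross edge has exactly one endpoint in $V$, so the cross edges partition into blocks $C_1,\dots,C_n$ according to their $V$-endpoint, with $|C_j|=d_G(v_j)$ and $\sum_j|C_j|=2m$. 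Hence the cross-edge contribution to the vertex sum of $v_j$ is a sum over the disjoint block $C_j$, which decouples the tuning of the different $V$-sums and is what makes the construction robust for an arbitrary $G$.

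First I would fix the coarse layout. Assign the smallest labels $\{1,\dots,m\}$ to the original edges, the middle labels $\{m+1,\dots,3m\}$ to the cross edges, and the largest labels $\{3m+1,\dots,3m+n\}$ to the spokes. Orient every cross edge from $U$ into $V$ and every spoke from $U$ into $w$, leaving the orientation of the original edges free. Since each $v_j$ then has its $d_G(v_j)$ cross edges (labels exceeding $m$) as in-edges and meets only $d_G(v_j)$ original edges (labels at most $m$), a termwise comparison of the two disjoint label pools shows its vertex sum is positive (indeed at least $d_G(v_j)^2$) whenever $d_G(v_j)\ge 1$, no matter how the cross labels are later distributed. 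Meanwhile every edge at $u_i$ points outward, so its vertex sum is $-(S_i+\ell_i)<0$, where $S_i$ is the sum of the cross labels at $u_i$ and $\ell_i$ is its spoke label; and $w$ has all spokes incoming, so its sum is the constant $W:=\sum_i\ell_i>0$. This separates the three classes by sign: the $U$-sums are negative while the $V$-sums and $w$-sum are positive, so no shadow can collide with a $V$-vertex or with $w$.

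It then remains to enforce distinctness within the positive and within the negative class. Among the shadows this is easy: once the cross labels are placed the values $S_i$ are fixed, so I would match the spoke labels to the $u_i$ in sorted order, which makes the quantities $S_i+\ell_i$ strictly monotone and hence the $U$-sums pairwise distinct; note that $W$ is invariant under this matching. The single allowed isolated vertex $v_{i_0}$ of $G$ causes no trouble: $v_{i_0}$ is isolated in $M(G)$ with sum $0$, which differs from all positive $V$- and $w$-sums and all negative $U$-sums, while $u_{i_0}$ retains its spoke and is handled like any other shadow.

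The heart of the argument, and the step I expect to be the main obstacle, is making the $V$-sums pairwise distinct and different from $W$. Writing $b_j$ for the already fixed contribution of the original edges at $v_j$, the sum at $v_j$ equals $b_j+\sigma_j$, where $\sigma_j$ is the total of the labels placed on the block $C_j$. Because the blocks $C_1,\dots,C_n$ are disjoint and exhaust the contiguous label set $\{m+1,\dots,3m\}$, I am free to choose any partition of these $2m$ labels into parts of the prescribed sizes $d_G(v_1),\dots,d_G(v_n)$, and must do so with the shifted block-sums $b_j+\sigma_j$ all distinct and none equal to $W$. I would isolate this as a self-contained combinatorial lemma: traversing a closed walk that covers $G$ (for instance an Euler circuit of the edge-doubling $2G$, taken componentwise) and assigning the cross labels in increasing order along it lets the largest label entering each block serve as a dominant term, after which the finitely many remaining collisions, together with the single forbidden value $W$, are eliminated by local swaps of labels between blocks. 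Proving that such swaps always succeed, for an arbitrary degree sequence and arbitrary shifts $b_j$, is the delicate point; once it is in place, all $2n+1$ vertex sums are distinct and $M(G)$ has the required antimagic orientation.
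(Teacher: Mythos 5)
You have not proved the statement in question, and the shortfall is twofold. First, the statement is Conjecture~\ref{CONJ} --- \emph{every} connected graph admits an antimagic orientation --- an open problem that the paper itself does not claim to settle; the paper proves only the special case Theorem~\ref{MAIN}, for Mycielskians. Your construction likewise addresses only graphs of the form $M(G)$, and that class is thin: $|V(M(G))|=2n+1$ is always odd, so no connected graph of even order (already $K_2$ or $P_4$) is a Mycielskian, and your argument says nothing about such graphs. So at best your proposal is an attempt at Theorem~\ref{MAIN}. Measured against the paper's proof of that theorem, your layout is essentially its mirror image (the paper orients every edge of $E(M(G))\setminus E(G)$ \emph{into} the shadow vertices, making the $u_i$-sums positive, the $v_i$-sums negative, and $w$ the unique most negative vertex), and several of your ingredients are sound: the sign separation, the termwise bound $s(v_j)\ge d_G(v_j)^2$, the sorted matching of spoke labels to the shadows, and the observation that a single isolated vertex contributes the harmless sum $0$.

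The genuine gap is exactly the step you defer: making the $V$-sums pairwise distinct. You reduce it to an unproven lemma --- partition $\{m+1,\dots,3m\}$ into blocks of sizes $d_G(v_1),\dots,d_G(v_n)$ so that the shifted sums $b_j+\sigma_j$ are pairwise distinct and avoid $W$ --- and then appeal to ``local swaps'' without showing they terminate; a swap repairing one collision perturbs two block sums and can create new collisions, and for arbitrary shifts $b_j$ you offer no potential function or invariant. The paper closes precisely this hole with a reservation trick absent from your proposal: for each $v_i$ it reserves one incident cross edge $e_{v_i}$ (possible since $d_G(v_i)\ge 1$, which is exactly where the isolated-vertex hypothesis enters), labels the remaining cross edges \emph{arbitrarily} with $\{m+1,\dots,3m-n\}$, sorts the partial sums $s'(v_1)\le\cdots\le s'(v_n)$, and gives $e_{v_i}$ the label $3m+1-i$ from the dedicated block $\{3m-n+1,\dots,3m\}$; then for $i<j$ one gets $s(v_j)-s(v_i)=\bigl(s'(v_j)-s'(v_i)\bigr)+(j-i)>0$, so distinctness is immediate and swap-free. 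The forbidden value $W$ also need not be folded into your lemma, but note that your positivity argument alone does not control the magnitude of $b_j$ (if all original edges enter $v_j$, then $b_j$ can be of order $d_G(v_j)\,m$ and $s(v_j)$ can exceed $W$); if you additionally orient and label $E(G)$ via Lemma~\ref{EULER}, so that $b_j\le m+(d_G(v_j)-1)/2$, then $s(v_j)\le 3m(n-1)+m+(n-2)/2<3mn<W$ for every $j$, the mirror of the paper's comparison of $s_{(D,\tau)}(v_i)$ with $s_{(D,\tau)}(w)$. Adopting the reservation trick and the Euler-circuit labeling of $E(G)$ would complete your argument for $M(G)$; nothing in the proposal, however, touches the general conjecture.
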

While Question~\ref{QUES} seems to be very difficult, Conjecture~\ref{CONJ} has been studied intensively recently. 
We say an undirected graph $G$ {\em admits an antimagic orientation}, if there exist an orientation $D$ on $E(G)$ and a bijection $\tau$ from $E(G)$ to $\{1,2,\ldots,|E(G)|\}$ such that the vertex sums $s_{(D,\tau)}(v)$, defined earlier in this paragraph, of all vertices are pairwise distinct. 

There are several families of graphs proved to satisfy Conjecture~\ref{CONJ} by different groups of researchers: 
$K_n$ with $n\ge 4$, $S_n$ with $n\ge 3$,  $W_n$ with $n\ge 3$, the $(2d+1)$-regular graphs with $d\ge 0$ by Hefetz et al~\cite{HMS10};
the $2d$-regular graphs with $d\ge 2$ by Li, Song, Wang, Yang, and Zhang~\cite{LSW19}, by Yang~\cite{Y19}, and by Song and Hao~\cite{SH192};  
the biregular graphs by Shan and Yu~\cite{SY17}; 
the Halin graphs by Yu, Chang, and Zhou~\cite{YCZ19}; 
the caterpillars by Lozano~\cite{L18}; 
the lobsters  by Gao and Shan~\cite{GS20};  
and the complete $k$-ary trees by Song and Hao~\cite{SH19}.
In addition to the above special graphs, 
Yang, Carlson, Owens, Perry, Singgih, Song, Zhang, and Zhang~\cite{YCO19}, proved that every connected graph with at least $n\ge 9$ vertices and maximum degree at least $n-5$ admits an antimagic labeling, and 
Song, Yang, and Zhang~\cite{SYZ19} proved that every graph $G$ with independent number at most 4 or least $|V(G)|/2$ admits an antimagic orientation. 
Some of the above results are also true for disconnected graphs.

Let $G$ be a graph with $V(G)=\{v_1,\ldots,v_n\}$. The \emph{Mycielski construction} of $G$, $M(G)$ is a new graph with $V(M(G))=\{v_1,\ldots,v_n\}\cup\{u_1, \ldots,u_n\}\cup\{w\}$ and  $E(M(G))=E(G)\cup\{v_i u_j \mid  \mbox{ if }v_iv_j\in E(G)\}\cup \{wu_i\mid 1\le i\le n\}$. Moreover, we call $\{u_1,\ldots,u_n\}$ the image vertices of $V(G)$. 
An example is $M(K_2)=C_5$. 
This graph operation was introduced by Mycielski~\cite{M55} to construct the graphs having the large chromatic number but the small clique number. 
Given two graphs $G$ and $H$, the {\em corona product} of $G$ and $H$, denoted as $G\odot H$, is the graph obtained by taking one copy of $G$ as the {\em center graph}, $|V(G)|$ copies of $H$ as the {\em outer graphs}, and join the each vertex of $G$ to every vertex of a copy of $H$. This graph product was introduced by Frucht and Harary\cite{FH70} in 1970. 
In general, the product is non-commutative since one can see that $K_1\odot K_2$ is $K_3$ but $K_2\odot K_1$ is $P_4$. 
The research of the antimagic problems for the corona product of graphs can be found in \cite{DIMP15,HASA17,TSE12}.

In this paper, we study the antimagic orientation problem for the Mycielski  construction and also the corona product of graphs. We manage to prove that some graphs obtained by the graph operations mentioned above satisfying Conjecture~\ref{CONJ}.

The organizations of the paper are the following.
In next section, we discuss a technical lemma which is used frequently for constructing the antimagic orientations in recent papers.
The result on the Mycielski constructions is presented in Section 3, while the result on the corona products is presented in Section 4.

\section{The vertex sums induced by the Euler circuits}

A {\em closed walk} in a multiple graph $G$ is an {\em Euler circuit} if it traverses every edge in $G$ exactly once. The following theorem is classical:
\begin{theorem}[Euler, 1736]
A connected multiple graph has an Euler circuit if and only if the degree of every vertex is even.
\end{theorem}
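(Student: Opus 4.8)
The plan is to prove the two directions separately. For the forward direction (necessity), I would regard the Euler circuit as a cyclic sequence $e_1 e_2 \cdots e_m$ of edges. Each time the circuit passes through a vertex $v$, it enters $v$ along one edge and departs along another, so the edges incident to $v$ are naturally partitioned into such entering/leaving pairs; a loop at $v$ contributes two to the count as well. Since the circuit traverses every edge exactly once, the degree of $v$ equals twice the number of times the circuit visits $v$, and is therefore even.

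For the reverse direction (sufficiency), I would argue by induction on the number of edges $m$, assuming throughout that the multigraph is connected and that every vertex has even degree. The base case $m=0$ is immediate. For the inductive step I would first produce a nontrivial closed walk: starting from any vertex of positive degree and repeatedly leaving along an unused edge, the evenness of every degree guarantees that whenever the walk enters a vertex it can also leave, so the walk can only terminate by returning to its starting point. This yields a closed trail $C$. Deleting the edges of $C$ leaves a multigraph in which every vertex again has even degree, since $C$ removes an even number of edges at each vertex. I would then apply the induction hypothesis to the connected components of the resulting graph and splice the resulting Euler circuits into $C$ at common vertices to obtain an Euler circuit of the whole graph.

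The main obstacle is making this splicing step rigorous. The key point to verify is that every component of the edge-deleted graph that still contains an edge must share a vertex with $C$: this follows from the connectivity of the original multigraph, since any such component is joined to the rest of the graph, and every connecting edge lay on $C$ after deletion. Once this is established, one can traverse $C$ and, upon first reaching a vertex that belongs to such a component, detour through that component's Euler circuit before continuing along $C$, thereby producing a single closed walk that uses every edge exactly once. The remaining details — accounting for isolated vertices created by the deletion, and confirming that loops and parallel edges inherent to the multigraph setting cause no problem — are routine bookkeeping rather than genuine difficulties.
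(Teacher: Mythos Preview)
Your argument is correct and is the standard textbook proof of Euler's theorem: necessity via the observation that each passage through a vertex uses two incident edges, and sufficiency via Hierholzer's splicing argument (build a closed trail, delete its edges, recurse on components, and splice the resulting circuits back into the trail at shared vertices). The only point worth tightening is the base case: a connected multigraph with $m=0$ edges is a single vertex, and one should remark that the empty closed walk at that vertex counts as an Euler circuit.

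As for comparison with the paper: there is nothing to compare. The paper does not prove this theorem at all; it merely states it as a classical result (``Euler, 1736'') and then invokes it in the proof of Lemma~\ref{EULER}. So your write-up supplies a proof where the paper gives none.
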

Using the Euler circuit, one can estimate the vertex sum of each vertex induced by the circuit. The lemma below plays an important role in proving our theorems and has appeared in several recent papers (e.g. 
Lemma 2.2 in~\cite{SYZ19} and Lemma 7 in~\cite{S20}). For the sake of completeness, we present the proof in the paper. Let $[a,b]:=\{a,a+1,\ldots,b\}$.

\begin{lemma}\label{EULER}
Let $p\ge 0$ be an integer, and $G$ be a graph with $|E(G)|=m\ge 1$. Either we can find an orientation $D$ of $G$ and a bijection $\tau: E(G)\rightarrow [p+1,,p+m]$ such that for any $v\in V(G)$,
\begin{equation}\label{bd1}
-(p+m)+\lfloor d_G(v)/2\rfloor \le s_{(D,\tau )}(v)\le p+m+ \lfloor d_G(v)/2\rfloor, 
\end{equation}
or an orientation $D'$ and a bijection $\tau'$ such that for any $v\in V(G)$,
\begin{equation}\label{bd2}
-(p+m)- \lfloor d_G(v)/2 \rfloor \le s_{(D',\tau' )}\left(v\right)\le p+m-\lfloor d_G(v)/2\rfloor. 
\end{equation}
\end{lemma}

\begin{proof}
It suffices to prove the lemma for connected graphs.
Let $V(G)=\{v_1,v_2,\ldots,v_n\}$ and $V_1=\{ v\in V(G)\mid d_G(v)\mbox{ is odd} \}$.
By the handshaking theorem, we see that $|V_1|$ is even. If $V_1\neq \varnothing$, let $V_1=\{v_{i_1}, v_{i_2},\ldots, v_{i_{2t}}\}$. 
Define $G^*=G$ when $V_1=\varnothing$, else $G^*$ is obtained by adding $t$ new edges $e^*_1,e^*_2,\ldots,e^*_t$ to $G$, where 
$e^*_j$ is an edge connecting $v_{i_j}$ and $v_{i_{j+t}}$. 
Thus, $d_{G^*}(v_i)$ is even for every vertex $v_i \in G^*$. 
Note that $G^*$ could be a multiple graph.

Pick an Euler circuit $C$ in $G^*$ with the conditions:
If $G$ contains an even degree vertex, we choose the even degree vertex as the initial vertex of $C$. Otherwise, every vertex in $V(G)$ has an odd degree and is incident to exactly one $e^*_i$ in $G^*$. We choose any vertex as the initial vertex of $C$ and the edge $e^*_i$ incident to it as the first edge of $C$. Suppose \[C=u_1,e_1,u_2,e_2,\ldots ,u_{m+t},e_{m+t},u_1,\]
where the $u_i$'s are vertices in $V(G)$ which may have repetitions, and
$e_1, e_2,\ldots, e_{m+t}$ are exactly the edges in $E(G^*)$. 

Let $e_{i_1},e_{i_2},\ldots,e_{i_m}$ in $C$
be the edges in $E(G)$. 
By orienting each edge a direction the same as it is in $C$, we obtain an orientation $D$ of $G$. 
 Observe that $|d^+_D(v_{i})-d^-_D(v_{i})|\le 1$. Define $\tau : E(G)\rightarrow [p+1, p+m]$ with $\tau(e_{i_j})=p+m+1-j$ for all $1\le j\le m$.
 If two edges $e$ and $e'$ incident to $v_i$ appears consecutively in $C$, then $\tau(e)+\tau(e')$ contributes one to the vertex sum of $v_i$. Thereby we calculate the vertex sums as follows.
For any vertex $v_i$ with  $d^+_D(v_{i})=d^-_D(v_{i})=d_G(v_i)/2$, 
\[
s_{(D,\tau)}(v_i)=
\left\{
\begin{array}{ll}
-m+d_G(v_i)/2     &  \mbox{if }v_i\mbox{ is incident to both the edges}\\
&\mbox{labeled with }p+m 
\mbox{ and }p+1;\\
d_G(v_i)/2     & \mbox{otherwise}.
\end{array}
\right.
\]
For any vertex $v_i$ with  $d^+_D(v_{i})-d^-_D(v_{i})=1$, 
let $e_{v_i}$ be the edge leaving $v_i$, whose predecessor in $C$ is some $e^*_j$. We have
\[s_{(D,\tau)}(v_i)=- \tau(e_{v_i})+(d_G(v_i)-1)/2 \]
\comments
{
\[
s_{(D,\tau)}(v_i)=\left\{
\begin{array}{ll}
- \tau(e_i)-m+(d_G(v_i)-1)/2 , &v_i\mbox{ is incident to both the edges}\\
&\mbox{labeled with $p+m$ and $p+1$ };\\
- \tau(e_i)+(d_G(v_i)-1)/2 , &\mbox{otherwise}.
\end{array}
\right.
\]
}
Similarly, for any vertex $v_i$ with  $d^+_D(v_{i})-d^-_D(v_{i})=-1$, we have 
\[s_{(D,\tau)}(v_i)= \tau(e_{v_i})+(d_G(v_i)-1)/2 \]
where $e_{v_i}$ is the edge entering $v_i$, whose successor in $C$ is some $e^*_j$.
Now the lower and upper bounds of Inequality~(\ref{bd1}) are obtained from the vertex sum of a vertex of odd degree.

Let $\tau'=\tau$ and $D'$ be the orientation obtained by reversing the direction of each edge in $D$. By the  similar arguments, we see that $\tau'$ and $D'$ satisfy  Inequality~(\ref{bd2}).
\end{proof}

\section{Mycielski construction of graphs}

Conventionally, the vertex sum of an isolated vertex is defined to be zero. Thus, a graph with more than one isolated vertex does not admit an antimagic orientation. 
Also, an isolated vertex of a graph is still an isolated vertex of its Mycielski construction. So we consider the graphs containing at most one isolated vertex in the following the theorem.

\begin{theorem}\label{MAIN}
Let $G$ be a graph with at most one isolated vertex. Then the Mycielski construction of $G$, $M(G)$
admits an antimagic orientation.  
\end{theorem}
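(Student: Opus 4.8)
The plan is to exploit the layered structure of $M(G)$. Writing $m=|E(G)|$, the edge set of $M(G)$ splits into the $m$ \emph{original} edges inside $\{v_1,\dots,v_n\}$, the $2m$ \emph{cross} edges $v_iu_j$, and the $n$ \emph{star} edges $wu_i$, for a total of $3m+n$, while $d(v_i)=2d_G(v_i)$, $d(u_i)=d_G(v_i)+1$ and $d(w)=n$. My strategy is to place the three vertex classes in pairwise disjoint ranges of vertex sums --- all non-isolated $v_i$ strictly negative, all $u_i$ strictly positive, and $w$ strictly the minimum --- and to force distinctness \emph{within} each class by a single elementary device. That device, which I would isolate as a small lemma, is the following sorting fact: if reals $a_1,\dots,a_r$ are already fixed and we may attach to them $r$ distinct labels $b_1<\dots<b_r$ by any bijection, then pairing the $a_i$ and the $b_i$ in common sorted order makes the resulting sums strictly increasing, hence pairwise distinct (consecutive gaps are at least $b_{i+1}-b_i\ge 1$); the same works when the labels are subtracted, by negating them. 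The key point is that this holds \emph{regardless} of the $a_i$, so each class needs only one private edge per vertex whose label I choose last.

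Concretely I would first apply Lemma~\ref{EULER} to $G$ (equivalently to its non-isolated part) with $p=0$ and labels $\{1,\dots,m\}$, fixing an orientation and labeling of $E(G)$ with $s^{G}(v_i)\le m+(d_G(v_i)-1)/2$. Next I orient \emph{every} cross edge from its $v$-endpoint to its $u$-endpoint and label them with $\{m+1,\dots,3m\}$. Since every cross label is at least $m+1$, the $d_G(v_i)$ out-going cross edges subtract at least $(m+1)d_G(v_i)$ from $v_i$, which already forces
\[
s(v_i)\le \Big(m+\tfrac{d_G(v_i)-1}{2}\Big)-(m+1)d_G(v_i)\le -1
\]
for every non-isolated $v_i$, so the whole $v$-class is negative. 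To separate the $v_i$ from one another I reserve, at each non-isolated $v_i$, one private cross edge $c_i$ (distinct $v_i$ give distinct edges), assign the remaining cross labels arbitrarily, and then set the reserved labels on the $c_i$ by the sorting device against the already-fixed partial sums. Finally I orient each star edge $w\to u_i$, label the star with the top block $\{3m+1,\dots,3m+n\}$, and assign those $n$ labels by the sorting device to the now-fixed quantities $a_i=\sum(\text{cross labels entering }u_i)\ge 0$; this makes the $u_i$ pairwise distinct with $s(u_i)\ge 3m+1>0$, while $s(w)=-\sum_{j=3m+1}^{3m+n}j$ is a single very negative value.

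It then remains to check global distinctness, which I expect to be the only genuinely delicate point. Within-class distinctness is handled by the sorting device, and the classes occupy the disjoint regions $(-\infty,-1]$, $\{0\}$ (the at most one isolated vertex, whose sum is $0$ by convention), and $[3m+1,\infty)$; the one remaining worry is that $s(w)$ might coincide with some $s(v_i)$. So the hard part will be a clean magnitude estimate showing $s(w)$ lies strictly below the entire $v$-class: comparing $|s(w)|\ge 3mn+n$ with the crude bound $|s(v_i)|\le (n-1)\cdot 3m+(2m+n)=3mn-m+n$ gives $s(w)<\min_i s(v_i)$ with a margin of $m>0$, so the estimate closes for all $m\ge 1$. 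The degenerate case $m=0$ (where $G$ is edgeless, hence $n\le1$) would be dispatched directly.
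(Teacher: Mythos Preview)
Your proof is correct and follows essentially the same approach as the paper: apply Lemma~\ref{EULER} to $E(G)$ with labels $\{1,\dots,m\}$, orient every remaining edge toward the image vertices, reserve one cross edge per $v_i$ and use the sorting trick on those (and again on the star labels $\{3m+1,\dots,3m+n\}$) to separate the $v_i$ and the $u_i$ respectively, then verify $s(w)<s(v_i)<0<s(u_i)$ by magnitude estimates. The only cosmetic differences are that the paper explicitly designates the top block $\{3m-n+1,\dots,3m\}$ as the reserved cross labels (you leave this choice free, which is harmless) and handles the isolated-vertex case by a separate relabeling step on $M(G-v)$ rather than folding it in directly via the observation that the isolated vertex sits at sum~$0$.
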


\begin{proof}
First if $G$ contains exactly an isolated vertex, then one can show the independence number of $M(G)$ is $(|V(M(G))|+1)/2$ and one of the maximum independent set is the set consisting of the isolated vertex and all image vertices. So the existence of the antimagic orientation follows the result in~\cite{SYZ19}.

Now suppose that $G$ contains no isolated vertex. 
Let $V(G)=\{v_1,v_2,\ldots,v_n\}$.
Define $V(M(G))=\{v_1,\ldots,v_n\}\cup\{u_1, \ldots,u_n\}\cup\{w\}$ and  $E(M(G))=E(G)\cup\{v_i u_j \mid  \mbox{ if }v_iv_j\in E(G)\}\cup \{wu_i\mid 1\le i\le n\}$. 
Let $|E(G)|=m$. Then $|E(M(G))|=3m+n$. We partition the label set $[1,3m+n]$ into $[1,m]$, $[m+1,3m-n]$, $[3m-n+1,3m]$, and $[3m+1,3m+n]$.

First apply Lemma~\ref{EULER} to obtain an orientation $D_1$ on $E(G)$ and a bijection $\tau$ from $E(G)$ to $[1,m]$ so that $ s_{(D_1,\tau )}\left(v\right)\le m+ (d_G(v_i)-1)/2$ for each $v_i\in V(G)$. 
We extend the orientation $D_1$ to $D$ on $E(M(G))$.   
Since every edge in $E(M(G))\setminus E(G)$ is incident to some image vertex $u_i$, 
we define the direction of it to be entering the $u_i$'s. 
For each vertex $v_i$, we select an edge in $E(M(G))\setminus E(G)$ incident to it as $e_{v_i}$. 
We arbitrarily label the edges of the form $v_iu_j$, except for the $e_{v_i}$'s, with labels in $[m+1,3m-n]$. 
Let $s'_{(D,\tau)}(v_i)$ be the partial (lacking of the label of $e_{v_i}$) vertex sum of $v_i$. 
Without loss of generality, we may assume 
\[s'_{(D,\tau)}(v_1)\le s'_{(D,\tau)}(v_2)\le \cdots\le s'_{(D,\tau)}(v_n). \]
Then we label $e_{v_i}$ with $3m+1-i$ for $1\le i\le n$.
Since $e_{v_{i}}$ is leaving $v_i$, we have $s_{(D,\tau)}(v_i)=s'_{(D,\tau)}(v_i)-3m-1+i$ for $1\le i\le n$,
which implies $s_{(D,\tau)}(v_i)\neq s_{(D,\tau)}(v_j)$ for $i\ne j$.
Finally, for the image vertices $u_1,u_2,\ldots,u_n$, 
suppose we have 
the partial (lacking of the label of $wu_i$) vertex sums satisfying the following:
\[s'_{(D,\tau)}(u_{i_1})\le s'_{(D,\tau)}(u_{i_2})\le \cdots\le s'_{(D,\tau)}(u_{i_n}). \]
Then we label $wu_{i_j}$ with $3m+j$ for $1\le j\le n$.
Since $wu_{i_j}$ is entering $u_{i_j}$, we have $s_{(D,\tau)}(u_{i_j})=s'_{(D,\tau)}(u_{i_j})+3m+j$ for $1\le j\le n$,
which implies $s_{(D,\tau)}(u_{i_j})\neq s_{(D,\tau)}(u_{i_k})$ for $j\ne k$.

Our orientation $D$ gives $s_{(D,\tau)}(u_{i})>0$ for $1\le i\le n$.
Let us see $s_{(D,\tau)}(v_{i})<0$ for $1\le i\le n$. By the fact that every edge of the form $v_iu_j$ is leaving $v_i$ and by 
Lemma~\ref{EULER}, we have 
\begin{eqnarray*}
s_{(D,\tau)}(v_{i}) &\le&  
s_{(D_1,\tau)}(v_i)-d_G(v_i)(m+1)\\
&\le& m+\lfloor d_G(v_i)/2\rfloor-d_G(v_i)(m+1)\\
&<& 0.
\end{eqnarray*}
It remains to show $s_{(D,\tau)}(v_{i})\neq s_{(D,\tau)}(w)$ for $1\le i\le n$.
For each $v_{i}$, there are $d_G(v_i)$ edges leaving $v_i$ and entering the image vertices, and one of them is $e_{v_i}$. The largest possible sum of the labels of these edges is $(\sum_{j=1}^{d_G(v_i)-1}3m-n+1-j)+3m$. 
Thus, 
\begin{eqnarray*}
s_{(D,\tau)}(v_{i}) &\ge&  
s_{(D_1,\tau)}(v_i)-\left(\sum_{j=1}^{d_G(v_i)-1}3m-n+1-j\right)-3m\\
&\ge& -m+ \lfloor d_G(v_i)/2\rfloor-\left(\sum_{j=1}^{n-2}3m-n+1-j\right)-3m\\
&>&-\sum_{j=1}^n 3m+j=s_{(D,\tau)}(w).
\end{eqnarray*}
Therefore the proof is completed. 
\end{proof}

\section{Corona product of graphs}

In this section, we investigate the corona product of graphs. 
Let us denote the center and outer graphs of a corona product by $G$ and $H$. 
The number of vertices and edges of $G$ are denoted as $n$ and $m$, and the numbers of vertices and edges of $H$ are denoted as $k$ and $\ell$. 
Let $V(G)=\{v_1,v_2,\ldots,v_n\}$ and 
$V(H)=\{u_1,u_2,\ldots u_k\}$. 
Denote the $i^{th}$ copy of $H$ as $H^{(i)}$ and the
$i^{th}$ copy of $u_j$ in $H^{(i)}$ as $u_j^{(i)}$. 
We will give the conditions to the outer graphs.

\begin{theorem}\label{corona}
Let $G$ and $H$ be the center and outer graphs of a corona product. Suppose that $H$ contains an even degree vertex. Then 
$G\odot H$ admits an antimagic orientation.
\end{theorem}

\begin{proof}
We construct the orientation of $D$ and the bijection $\tau$ for $G\odot H$ by three steps. 
Since $|E(G\odot H)|=m+n\ell+nk$, we partition the label set $[1, m+n\ell+nk]$ into three sets  $[1,m]$, $[m+1, m+n\ell]$, and $[m+n\ell+1, m+n\ell+nk]$.

First, apply the constructive method in  Lemma~\ref{EULER} to obtain an orientation $D_G$ of $G$ and a bijection $\tau_{G}$ from $E(G)$ to $[1,m]$
such that $s_{(D_G,\tau_G)}(v_i)$ satisfies 
\[
 s_{(D_G,\tau_G)}(v_i)\ge -m+ \lfloor d_G(v_i)/2 \rfloor.
\]
For the outer graph $H$, since $H$ contains an even degree vertex, we also apply Lemma~\ref{EULER} to obtain an orientation $D_H$ of $H$ and a bijection $\tau_H$ from $E(H)$ to $[1,\ell]$ so that 
\[
s_{(D_H,\tau_H)}(u_j)\le  \ell- \lfloor d_H(u_j)/2 \rfloor.
\]

Next, for $1\le i\le n$, orient every $H^{(i)}$ the same as $D_H$ and define $\tau_{H^{(i)}}$ from $E(H^{(i)})$ to $[m+(i-1)\ell+1, m+i\ell]$ by letting
$\tau_{H^{(i)}}(e^{(i)})=\tau_{H}(e)+m+(i-1)\ell$, where $e^{(i)}$ is the copy of $e$ in $H^{(i)}$. 
Note that if $u_j$ is an even degree vertex in $H$, then for $1\le i\le n$, we have 
\[s_{(D_{H^{(i)}}, 
\tau_{H^{(i)}})}(u_j^{(i)})
=
\left\{
\begin{array}{ll}
\ell-d_{H}(u_j)/2,     & \mbox{if }u_j\mbox{ is the initial vertex of the}\\  
&\mbox{Euler circuit traversing }H^*;\\
-d_{H}(u_j)/2,     & \mbox{otherwise},
\end{array}
\right.
\]
which is a constant with respect to $i$.
On the other hand, if $u_j^{(i)}$ is an odd degree vertex in $H^{(i)}$, then   
\[
s_{(D_{H^{(i)}}, \tau_{H^{(i)}})}(u_j^{(i)})
\le 
m+i\ell- \lfloor d_{H}(u_j)/2\rfloor  
\]
for $1\le i\le n$.

Let $M=\{v_iu_j^{(i)}\mid 1\le i\le n,1\le j\le k\}$ be the set of edges connecting $G$ and the $H^{(i)}$'s. We give the direction of every edge in $M$ from $H^{(i)}$ to $G$ and call this orientation $D_M$. 
Then define 
$\tau_M$ from $M$ to $[m+n\ell+1, m+n(k+\ell)]$ as follows.  
First arrange the vertices $u_j^{(i)}$'s 
as $u_{j_1}^{(i_1)},u_{j_2}^{(i_2)},\ldots,u_{j_{nk}}^{(i_{nk})}$, 
satisfying
\[
s_{(D_{H^{(i_1)}}, \tau_{H^{(i_1)}})}(u_{j_1}^{(i_1)})\ge
s_{(D_{H^{(i_2)}}, \tau_{H^{(i_2)}})}(u_{j_2}^{(i_2)})
\ge\cdots
\ge s_{(D_{H^{(i_{nk})}}, \tau_{H^{(i_{nk})}})}(u_{j_{nk}}^{(i_{nk})}). 
\]
Then label the edges $v_{i_1}u_{j_1}^{(i_1)},v_{i_2}u_{j_2}^{(i_2)},\ldots,v_{i_{nk}}u_{j_{nk}}^{(i_{nk})}$ with the labels $m+n\ell+1,m+n\ell+2,\ldots,m+n\ell+nk$ accordingly. 

Now, let $D$ be the orientation of $G\odot H$ such that $D|_{G}=D_G$, $D|_{M}=D_M$, and $D|_{H_i}=D_{H_i}$ for $1\le i\le n$, and 
$\tau$ be the bijection from $E(G\odot H)$ to $[1,m+n\ell+nk]$ such that $\tau|_{G}=\tau_G$, $\tau|_{M}=\tau_M$, and $\tau|_{H_i}=\tau_{H_i}$ for $1\le i\le n$. 
We show that the vertex sums 
$s_{D,\tau}(v)$ for $v\in V(G\odot H)$ are pairwise distinct. 
Since each edge of $M$ is directed from $H_i$ to $G$, we have 
\[s_{(D, \tau)}(u_j^{(i)})=
s_{(D_{H^{(i)}}, 
\tau_{H^{(i)}})}(u_j^{(i)})-\tau_M(v_iu_j^{(j)})
\le m+i\ell-  \lfloor d_{H}(u_j)/2 \rfloor  -(m+n\ell+1)
<0
\]for each $u_j^{(i)}$. Moreover, our labeling method for $v_iu_j^{(i)}$'s gives 
\[
s_{(D, \tau)}(u_{j_1}^{(i_1)})>
s_{(D, \tau)}(u_{j_2}^{(i_2)})
>\cdots> s_{(D, \tau)}(u_{j_{nk}}^{(i_{nk})}). 
\]
On the other hand, for each $v_i$, we have 
\[
s_{(D, \tau)}(v_i)=
s_{(D_{G}, 
\tau_{G})}(v_i)+\sum_{j=1}^k\tau_M(v_iu_j^{(i)})\ge -m+ \lfloor d_{G}(v_i)/2 \rfloor +\sum_{j=1}^k(m+n\ell+j)\\
>0.
\]
However, it is possible that $s_{(D,\tau)}(v_i)=s_{(D,\tau)}(v_j)$ for some $i\neq j$ at this moment. 
Recall that $s_{(D_{H^{(i)}}, \tau_{H^{(i)}})}(u_j^{(i)})$ is a constant for $1\le j\le n$ if $d_H(u_j)$ is even. 
Suppose that $u_1$ is an even degree vertex in $H$. 
Assume that 
\[
s_{(D, \tau)}(v_i)-\tau_M(v_iu_1^{(i)})=
s_{(D_{G}, 
\tau_{G})}(v_i)+\sum_{j=2}^k\tau_M(v_iu_j^{(i)})
\]
is non-decreasing in $i$. Then we permute the labels of $v_1u_1^{(1)},v_2u_1^{(2)},\ldots,v_nu_1^{(n)}$ in an increasing order. Thus, the new  vertex sums of $v_i$'s are all distinct. The vertex sums of $u_1^{(i)}$'s also permuted but are still all distinct. Then the proof is  completed. 
\end{proof}

By the handshaking theorem, we have the following corollary of Theorem~\ref{corona}.

\begin{corollary}\label{hodd}
Let $G$ and $H$ be the center and outer graphs of a corona product. Suppose that $H$ contains an odd number of vertices. Then $G\odot H$ admits an antimagic orientation.
\end{corollary}

\begin{theorem}\label{gleh}
Let $G$ and $H$ be the center and outer graphs of a corona product. If $|V(G)|\le|E(H)|$, then $G\odot H$ admits an antimagic orientation. 
\end{theorem}

\begin{proof}
By Theorem~\ref{corona}, it suffices to prove the statement with the assumption that the degree of each vertex in $H$ is odd.  
We can further assume that the number of vertices of $H$, $k$, is even by Corollary~\ref{hodd}. 
    
For $k=2$, either $H=2K_1$ or $H=K_2$. The former graph contains the even degree vertices, so consider $H=K_2$. Thereby $G=K_1$ since $n\le\ell= 1$. We have $G\odot H=K_3$ which admits an antimagic orientation. 

Suppose $k\ge 4$. We partition the set 
$[1,m+n\ell+nk]$ into four sets 
$[1,n]$, $[n+1,n+m]$, 
$[n+m+1,m+n(\ell+1)]$, and 
$[m+n(\ell+1)+1,m+n\ell+nk]$.
First apply Lemma~\ref{EULER} to construct the orientation $D_G$ and the bijection $\tau_G$ from $E(G)$ to $[n+1,n+m]$ satisfying   
\[
s_{(D_G,\tau_G)}(v_i)\ge -(m+n)- \lfloor d_G(v_i)/2\rfloor,
\]
and the orientation $D_H$ and the bijection $\tau_H$ from $E(H)$ to $[1,\ell]$ satisfying   
\[
s_{(D_{H}, \tau_{H})}(u_j)
\le 
\ell- \lfloor d_{H}(u_j)/2\rfloor.  
\]
Then imitate the proof of Theorem~\ref{corona} to obtain $D_{H^{(i)}}$ and $\tau_{H^{(i)}}$ with 
\[
s_{(D_{H^{(i)}}, \tau_{H^{(i)}})}(u_j^{(i)})
\le 
n+m+i\ell- \lfloor d_{H}(u_j)/2\rfloor.  
\]
Recall that for each $u_j\in V(H)$, its vertex sum $s_{(D_H,\tau_H)}(u_j)$ is equal to 
\[
\tau_H(e_{u_j})-\lfloor d_{H}(u_j)/2\rfloor \mbox{ or }  -\tau_H(e_{u_j})-\lfloor d_{H}(u_j)/2\rfloor,       
\] where $e_{u_j}$ is an edge incident to $u_j$ whose definition is given in Lemma~\ref{EULER}. 
We claim that there exists some $u_j$ with  
\[
s_{(D_H,\tau_H)}(u_j)=\tau_H(e_{u_j})-\lfloor d_{H}(u_j)/2\rfloor>0.
\]
Since the total vertex sum $\sum_{j=1}^k s_{(D_H,\tau_H)}(u_j)=0$, we must have positive $s_{(D_H,\tau_H)}(u_j)$ for some $u_j$ unless $s_{(D_H,\tau_H)}(u_j)=0$ for every $u_j$. 
Therefore, if the claim is not true, then 
$s_{(D_H,\tau_H)}(u_j)=0$ for every $u_j$. 
By the pigeonhole principle, there exist two vertices $u_j$ and $u_{j'}$ that have the same degree with 
\[
\tau_H(e_{u_j})-\lfloor d_{H}(u_j)/2\rfloor
=
\tau_H(e_{u_{j'}})-\lfloor d_{H}(u_{j'})/2\rfloor
=0. 
\]
Thus, $\tau_H(e_{u_j})= \tau_H(e_{u_j'})$ and hence 
$e_{u_j}= e_{u_j'}$. However, if $e_{u_j}= e_{u_j'}$, then the label of $e_{u_j}$ should contribute $\tau_H(e_{u_j})$ and $-\tau_H(e_{u_j})$ to the two endpoints of $e_{u_j}$, respectively. This contradicts our assumption. As a consequence, we must have some $u_j$ with positive $s_{(D_H,\tau_H)}(u_j)$ and the claim is proved. 

Now let $u_1$ be a vertex such that 
$s_{(D_H,\tau_H)}(u_1)>0$ and 
$U=\{u_1^{(i)}\mid 1\le i\le n\}$. 
We define the orientation $D_M$ by directing all the edges in $M$ from $H^{(i)}$ to $G$. To define $\tau_M$, first arrange the vertices $u_j^{(i)}$'s for $1\le i\le n$ and $2\le j\le k$ 
as $u_{j_1}^{(i_1)},u_{j_2}^{(i_2)},\ldots,u_{j_{n(k-1)}}^{(i_{n(k-1)})}$
such that 
\[
s_{(D_{H^{(i_1)}}, \tau_{H^{(i_1)}})}(u_{j_1}^{(i_1)})\ge
s_{(D_{H^{(i_2)}}, \tau_{H^{(i_2)}})}(u_{j_2}^{(i_2)})
\ge\cdots
\ge s_{(D_{H^{(i_{n(k-1)})}}, \tau_{H^{(i_{n(k-1)})}})}(u_{j_{n(k-1)}}^{(i_{n(k-1)})}). 
\]
Then label the edges $v_{i_1}u_{j_1}^{(i_1)},v_{i_2}u_{j_2}^{(i_2)},\ldots,v_{i_{n(k-1)}}u_{j_{n(k-1)}}^{(i_{n(k-1)})}$ with the labels $m+n(\ell+1)+1,m+n(\ell+1)+2,\ldots,m+n(k+\ell)$ accordingly. 
Now for each vertex $v_i$, we consider the partial vertex sum $s'_{(D,\tau)}(v_i)=s_{(D_G,\tau_G)}(v_i)+\sum_{j=2}^k\tau_M(v_iu_j^{(i)})$. Arrange the $v_i$'s so that
\[
s'_{(D,\tau)}(v_{i_1})\le s'_{(D,\tau)}(v_{i_2})\le\cdots\le s'_{(D,\tau)}(v_{i_n}).
\]
Then label the edges $v_{i_1}u_1^{({i_1})},v_{i_2}u_1^{({i_2})},\ldots, v_{i_n}u_1^{({i_n})}$
with the labels $1,2,\ldots,n$ accordingly to complete defining $\tau_M$. 

The vertex sums $s_{(D,\tau)}(u_{j}^{(i)})$'s for $1\le i\le n$ and $2\le j\le k$ are all distinct since 
\[
s_{(D_{H^{(i_1)}}, \tau_{H^{(i_1)}})}(u_{j_1}^{(i_1)})>
s_{(D_{H^{(i_2)}}, \tau_{H^{(i_2)}})}(u_{j_2}^{(i_2)})
>\cdots
> s_{(D_{H^{(i_{n(k-1)})}}, \tau_{H^{(i_{n(k-1)})}})}(u_{j_{n(k-1)}}^{(i_{n(k-1)})})
\]
by our labeling method. Moreover, we have 
\begin{eqnarray*}
s_{(D, \tau)}(u_j^{(i)})&=&
s_{(D_{H^{(i)}}, \tau_{H^{(i)}})}(u_j^{(i)})
-\tau(v_iu_j^{(i)})\\
&\le& n+m+i\ell- \lfloor d_{H}(u_j)/2\rfloor-[  m+n(\ell+1)+1]<0. 
\end{eqnarray*}
Analogously, for the vertices in $V(G)$, we have $s_{(D,\tau)}(v_{i_1})< s_{(D,\tau)}(v_{i_2})<\cdots< s_{(D,\tau)}(v_{i_n})$,  and when $k\ge 4$, 
\begin{eqnarray*}
s_{(D,\tau)}(v_{i_1})&=&s_{(D_G,\tau_G)}(v_{i_1})+\sum_{j=2}^k\tau_M(v_{i_1}u_j^{({i_1})})+1\\
&\ge& -(n+m)-\lfloor d_G(v_{i_1})/2\rfloor +\sum_{j=2}^k\tau_M(v_{i_1}u_j^{({i_1})})+1\\
&>& -(n+m)-m/2+\sum_{j=2}^4[m+n(\ell+1)+j-1]\\
&>&m+n(\ell+1)+1.
\end{eqnarray*}
For each vertex in $U$, we have 
\begin{eqnarray*}
s_{(D,\tau)}(u_1^{(i)})&=&s_{(D_{H^{(i)}},\tau_{H^{(i)}})}(u_1^{(i)})-\tau_M(v_iu_1^{(i)})\\
&= &\tau_{H^{(i)}}(e_{u_1}^{(i)})-\lfloor d_H(u_1)/2\rfloor-\tau_M(v_iu_1^{(i)}) \\
&\ge &\tau_H(e_{u_1})+n+m+(i-1)\ell-\lfloor d_H(u_1)/2\rfloor-n\\
&>&0.
\end{eqnarray*}
On the other hand, for $1\le i\le n$
\[
s_{(D,\tau)}(u_1^{(i)})=\tau_{H^{(i)}}(e_{u_1}^{(i)})-\lfloor d_H(u_1)/2\rfloor-\tau_M(v_iu_1^{(i)}) <\tau_{H^{(i)}}(e_{u_1}^{(i)})\le m+n+n\ell.
\]
Hence the vertex sum of each vertex in $U$ is different to that of any vertex not in $U$. 
To finish the proof, we verify that all vertices in $U$  have distinct vertex sums. For $i\neq i'$, we have 
\begin{eqnarray*}
&&|s_{(D,\tau)}(u_1^{(i)})-s_{(D,\tau)}(u_1^{(i')})|\\
&=&\left|\left[s_{(D_{H^{(i)}},\tau_{H^{(i)}})}(u_1^{(i)})-\tau_M(v_iu_1^{(i)})\right]-\left[s_{(D_{H^{(i')}},\tau_{H^{(i')}})}(u_1^{(i')})-\tau_M(v_iu_1^{(i')})\right]\right|\\
&\ge & |s_{(D_{H^{(i)}},\tau_{H^{(i)}})}(u_1^{(i)})-s_{(D_{H^{(i')}},\tau_{H^{(i')}})}(u_1^{(i')}) |- |\tau_M(v_{i}u_1^{(i)})-\tau_M(v_{i'}u_1^{(i')}) |\\
&\ge &|(i-i')\ell|-(n-1)\\
&>&0
\end{eqnarray*}
by the assumption $\ell\ge n$. So the proof is completed. 
\end{proof}

\end{document}